\theoremstyle{plain}
\newtheorem{lemma}{Lemma}[section]
\newtheorem{theorem}[lemma]{Theorem}
\newtheorem*{stat}{\name}
\newcommand{\name}{testing}
\theoremstyle{definition}
\newtheorem{definition}[lemma]{Definition}
\newtheorem{example}[lemma]{Example}
\newtheorem{problem}{Problem}
\theoremstyle{remark}
\newtheorem{remark}[lemma]{Remark}
\newtheorem*{remark*}{Remark}
\newcommand{\qedc}{{\qed}~{\rm Claim~{\theclaim}.}}
\newcommand{\qedsc}{{\qed}~{\rm Claim.}}
\numberwithin{equation}{section}
\newcommand{\set}[1]{\{#1\}}
\newcommand{\setm}[2]{\set{#1\mid#2}}
\newcommand{\fami}[2]{(#1)_{#2}}
\newcommand{\upw}{\mathbin{\uparrow}}
\newcommand{\cA}{\mathcal{A}}
\newcommand{\cC}{\mathcal{C}}
\newcommand{\cD}{\mathcal{D}}
\newcommand{\cF}{\mathcal{F}}
\newcommand{\cK}{\mathcal{K}}
\newcommand{\cL}{\mathcal{L}}
\newcommand{\cM}{\mathcal{M}}
\newcommand{\cN}{\mathcal{N}}
\newcommand{\cV}{\mathcal{V}}
\newcommand{\cW}{\mathcal{W}}
\newcommand{\cQ}{\mathcal{Q}}
\DeclareMathOperator{\crita}{crit}
\newcommand{\crit}[2]{\crita({{#1};{#2}})}
\DeclareMathOperator{\Con}{Con}
\DeclareMathOperator{\SI}{SI}
\DeclareMathOperator{\Ker}{Ker}
\newcommand{\dv}{\boldsymbol{2}}
\DeclareMathOperator{\M}{M}
\DeclareMathOperator{\length}{length}
\begin{document}

%\markboth{P. Gillibert and M. Plo\v s\v cica}{Congruence FD-maximal varieties of algebras}

\title{Congruence FD-maximal varieties of algebras}
%\date{\today}

\author[P.~Gillibert]{Pierre Gillibert}
\address{P.~Gillibert, Charles University in Prague, Faculty of Mathematics and Physics, Department of Algebra, Sokolovsk\'a 83, 186 00 Prague, Czech Republic.}
\address{Centro de \'Algebra da Universidade de Lisboa, Av. Prof. Gama Pinto 2, 1649-003 Lisboa, Portugal.}
\email{gilliber@karlin.mff.cuni.cz, pgillibert@yahoo.fr}
\email{http://www.math.unicaen.fr/\~{}giliberp/}

\author[M.~Plo\v s\v cica]{Miroslav Plo\v s\v cica} %\footnote{Supported by VEGA Grant 2/0194/10.}}
\address{M.~Plo\v s\v cica,
Mathematical Institute, Slovak Academy of Sciences,
Gre\v s\'akova 6, 04001 Ko\v sice, Slovakia.}
\address{Institute of Mathematics, \v Saf\'arik's University\\
Jesenn\'a 5, 04154 Ko\v sice, Slovakia}
\email{miroslav.ploscica@upjs.sk}

\thanks{Supported by VEGA Grant 2/0194/10 and the institutional grant MSM 0021620839.}

\keywords{distributive lattice, variety, congruence lattice}
\subjclass[2010]{primary 06B10, secondary 08A30}
%06B10 : Ideals, congruence relations
%08A30 : Subalgebras, congruence relations

\begin{abstract}
We study the class of finite lattices that are isomorphic to the congruence lattices of algebras from a given finitely generated congruence-distributive variety. If this class is as large as allowed by an obvious necessary condition, the variety is called congruence FD-maximal. The main results of this paper characterize some special congruence FD-maximal varieties.
\end{abstract}

\maketitle

%\maketitle
\section{Introduction}

The study of congruence lattices is one of the central topics in universal algebra. For a class~$\cK$ of algebras we denote by $\Con\cK$ the class of all lattices isomorphic to $\Con A$ (the congruence lattice of an algebra $A$) for some $A\in\cK$. There are many papers investigating $\Con\cK$ for various classes~$\cK$. However, the full description of $\Con\cK$ has proved to be a very difficult (and probably intractable) problem, even for the most common classes of algebras, like groups or lattices. A recent evidence of this is the solution of the Congruence Lattice Problem (CLP) by F. Wehrung~\cite{WE}.

The difficulty in describing $\Con\cK$ leads to the consideration of the following problem.

\begin{problem}\label{pro}
Given the classes~$\cK$ and~$\cL$ of algebras, decide whether $\Con\cK\subseteq\Con\cL$.
\end{problem}

Note that Problem~\ref{pro} is a loosely formulated array of problems. A ``real'' problem which arises from Problem~\ref{pro} is, for example, whether the containment $\Con\cK\subseteq\Con\cL$ is \emph{decidable}, for \emph{finitely generated} varieties~$\cK$ and~$\cL$.

The problem seems more tractable when both~$\cK$ and~$\cL$ are congruence-distributi\-ve varieties (equational classes) of algebras. In this case many partial results are available, as well as several promising strategies, for instance topological (\cite{P1}) or categorical (\cite{PG}).

The knowledge obtained so far shows the importance of critical points in the sense of the following definition. Let $L_c$ denote the set of all compact elements of an algebraic lattice~$L$.

\begin{definition}\textup{(See \cite{PG}.)} Let~$\cK$ and~$\cL$ be classes of algebras. The \emph{critical point of~$\cK$ under~$\cL$}, denoted by $\crit\cK\cL$, is the smallest cardinality of $L_c$ for $L\in\Con\cK\setminus\Con\cL$ (if $\Con\cK\nsubseteq
\Con\cL$) or $\infty$ (if $\Con\cK\subseteq\Con\cL$).
\end{definition}

For most~$\cK$ and~$\cL$, the critical point is finite or countably infinite. Examples of varieties with uncountable $\crit\cK\cL$ has been found in \cite{PG}, \cite{G2} and \cite{P2}. 

So far, most of the work on critical points has been devoted to distinguishing $\Con\cK$ and $\Con\cL$ in the case of an infinite critical point. The methods include using algebraic and topological invariants (\cite{P1}, \cite{P3}, \cite{P4}), refinement properties(\cite{W1}, \cite{PTW}, \cite{P5}) and lifting of diagrams by the $\Con$ functor (\cite{PG}, \cite{G2}). The case where ~$\cK$ and~$\cL$ are both finitely generated varieties of lattices has been particularly studied. For instance, if~$\cK$ is neither contained in~$\cL$ nor its dual then $\crit\cK\cL\le\aleph_2$ (\cite{G3}).

The case of a finite critical point is also interesting. If~$\cK$ is a finitely generated congruence-distributive variety and~$L$ is a finite distributive lattice, then there is an algorithm deciding whether $L\in\Con\cK$. (For instance, see \cite{N5}.) However, this does not provide a description of the class $\Con\cK$ and we have no algorithm deciding whether $\crit\cK\cL$ is finite, even if both~$\cK$ and~$\cL$ are finitely generated and congruence-distributive.

This paper is devoted to the finite version of the Problem~\ref{pro}, that is, to the characterization of the finite members of $\Con\cK$ and to comparing the finite parts of $\Con\cK$ and $\Con\cL$ for different~$\cK$ and~$\cL$. Even this task is difficult and we can present a satisfactory solution only in some very special cases. However, we believe that the ideas contained in this paper can be applied in more general situations and can contribute significantly to the solution of Problem~\ref{pro} (for congruence-distributive and finitely generated varieties~$\cK$ and~$\cL$).

\section{Basic Concepts}

Now we recall basic denotations and facts. We assume familiarity with basic concepts from lattice theory and universal algebra. For all undefined concepts and unreferenced facts we refer to \cite{GG} and \cite{UA}.

If $f\colon\ X\to Y$ is a mapping and $Z\subseteq X$, then $f{\restriction} Z$ denotes the restriction of~$f$ to $Z$. Furthermore, $\Ker f$ (the kernel of~$f$) is the binary relation on $X$ defined by $\Ker f=\setm{(x,y)\in X^2}{f(x)=f(y)}$. If~$f$ is a homomorphism of algebras, then $\Ker f$ is a congruence. Given maps $f,g\colon A\to B$ and $X\subseteq A$, we denote $(f,g)[X]=\setm{(f(x),g(x))}{x\in X}$.

For $\alpha\in\Con A$ and $x\in A$, we denote by $x/\alpha$ the congruence class containing $x$.

We denote by $\SI(\cV)$ the class of all subdirectly irreducible members of a variety~$\cV$.

If $a$ is an element of a lattice (or an ordered set)~$L$, then we denote $\upw a=\setm{b\in L}{b\ge a}$. The smallest and the greatest element of any lattice (if they exist) will be denoted by $0$ and $1$, respectively. The two-element lattice will be denoted by $\dv=\set{0,1}$. An element $a$ of a lattice~$L$ is called \emph{completely meet-irreducible} iff $a=\bigwedge X$ implies that $a\in X$, for every subset $X$ of~$L$. The greatest element of~$L$ is not completely meet-irreducible. Let $\M(L)$ denote the set of all completely meet-irreducible elements of~$L$. Clearly, $\alpha\in\Con A$ is completely meet-irreducible if and only if the quotient algebra $A/\alpha$ is subdirectly irreducible. Hence, we have the following easy fact.

\begin{lemma}\label{11}
Let~$\cV$ be a variety and $L=\Con A$ for some $A\in\cV$. Then for every $x\in \M(L)$, the lattice~$\upw x$ is isomorphic to $\Con T$ for some $T\in \SI(\cV)$.
\end{lemma}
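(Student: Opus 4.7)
The plan is to set $T = A/x$ and verify the two required properties: that $T$ lies in $\SI(\cV)$, and that $\Con T \cong \upw x$.

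First, since $x \in \Con A$ and $\cV$ is closed under taking homomorphic images, $T = A/x$ belongs to $\cV$. The paper has already observed, in the paragraph preceding the lemma, that a congruence $\alpha \in \Con A$ is completely meet-irreducible if and only if $A/\alpha$ is subdirectly irreducible. Since $x \in \M(L) = \M(\Con A)$, it follows that $T = A/x \in \SI(\cV)$, which gives the first requirement.

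For the second, I would invoke the standard correspondence theorem for congruences: the map sending a congruence $\beta \in \Con A$ with $\beta \geq x$ to its image $\beta/x = \setm{(a/x, b/x)}{(a,b) \in \beta}$ in $\Con T$ is an order-isomorphism from the interval $[x, 1_A]$ of $\Con A$ onto $\Con T$. Because $[x, 1_A]$ is exactly the up-set $\upw x$ in $L = \Con A$, this yields $\upw x \cong \Con T$, as desired.

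There is no real obstacle here: both ingredients (the identification of completely meet-irreducible congruences with subdirectly irreducible quotients, and the correspondence between congruences on $A/x$ and congruences on $A$ above $x$) are standard universal-algebraic facts. The only thing to be careful about is simply assembling them correctly, observing that the correspondence is a lattice isomorphism (not merely a bijection) so that it transports the lattice structure of $\upw x$ to that of $\Con T$.
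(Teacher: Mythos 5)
Your proof is correct and is exactly the argument the paper intends: the paper leaves the lemma as an ``easy fact'' following from the observation, stated immediately before it, that $x$ is completely meet-irreducible in $\Con A$ if and only if $A/x$ is subdirectly irreducible, combined with the standard correspondence isomorphism $\upw x\cong\Con(A/x)$. Nothing is missing.
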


This lemma provides a basic information about congruence lattices of algebras in~$\cV$. It is especially effective in the case of a congruence-distributive variety~$\cV$ and a finite lattice~$L$, because finite distributive lattices are determined uniquely by the ordered sets of their meet-irreducible elements. 

There are varieties for which the necessary condition of Lemma~\ref{11} (for distributive lattices) is also sufficient on the finite level. We call such varieties \emph{congruence FD-maximal} Hence~$\cV$ is congruence FD-maximal if for every finite distributive lattice~$L$ the following conditions are equivalent:
\begin{enumerate}
\item[(i)] $L\in\Con\cV$;
\item[(ii)] $\upw x\in\Con\cV$ for each $x\in\M(L)$.
\end{enumerate}

\begin{remark}
Notice that the equivalence of (i) and (ii) in the definition of congruence FD-maximality, only holds for finite distributive lattices. For example $\cD$, the variety of distributive lattices, is congruence FD-maximal. The finite congruence lattices in $\Con\cD$ are the finite Boolean algebras. The only subdirectly irreducible lattice in $\cD$ is the two-element lattice.

Denote by $M_3$ the lattice of length two with three atoms. It is easy to see that~$\upw x$ is the two-element lattice, for each meet-irreducible $x\in M_3$, which is isomorphic to $\Con\dv$. However $M_3$ does not belong to $\Con\cD$.

There also exists an infinite algebraic distributive lattice~$L$, such that~$\upw x$ is the two-element lattice, for each meet-irreducible $x\in L$, but~$L$ does not belongs to $\Con\cD$.
\end{remark}

\begin{example}
The variety of all lattices is congruence FD-maximal, as each finite distributive lattice is isomorphic to the congruence lattice of a lattice (the first published proof is due to G. Gr{\"a}tzer and E.\,T. Schmidt in \cite{GrSc62}). For similar reason the variety of all modular lattices is congruence FD-maximal (see \cite{S74}).
\end{example}

\section{The limit construction}
The algebras with a prescribed congruence lattices can be often constructed as limits of suitable diagrams. Let us recall the construction from \cite{N5}.

An \emph{ordered diagram of sets} is a triple $(P,\cA,\cF)$, where $P$ is a partially ordered set, $\cA=\fami{A_p}{p\in P}$ is a family of sets indexed by $P$ and $\cF=\fami{f_{pq}}{p\le q\text{ in }P}$ is a family of functions $f_{pq}\colon A_p\to A_q$ such that
\begin{enumerate}
\item[(1)] $f_{pp}$ is the identity map for every $p\in P$;
\item[(2)] $f_{qr}f_{pq}=f_{pr}$ for every $p,q,r\in P$, $p\le q\le r$.
\end{enumerate}
For any ordered diagram of sets we define its {\it limit} as
\[
\lim(P,\cA,\cF)=\setm{a\in\prod_{p\in P}A_p}{a_q=f_{pq}(a_p)\text{ for every }p\le q\text{ in }P}.
\]
Thus, our limit is the limit in the sense of the category theory (applied to the category of sets). Moreover, if $A_p$ are algebras of the same type and $f_{p,q}$ are homomorphisms, then the limit is a subalgebra of the direct product. In universal algebra, this construction is often called \emph{the inverse limit} (see \cite{UG}). 

\begin{definition} \label{def} An ordered diagram of sets
$(P,\cA,\cF)$
is called {\it admissible} if the following conditions are satisfied:
\begin{enumerate}
\item[(i)] for every $p\in P$ and every $u\in A_p$ there exists
$a\in\lim(P,\cA,\cF)$ such that $a_p=u$;
\item[(ii)] for every $p\not\le q$ in $P$, there exist $a,b\in\lim
(P,\cA,\cF)$ such that $a_p=b_p$ and $a_q\ne b_q$.
\end{enumerate}
\end{definition}

\begin{theorem}[{\cite[Theorem~2.4]{N5}}]\label{re1}
Let~$\cV$ be a congruence-distributive variety and let~$L$ be a finite distributive lattice. Set $P=\M(L)$. For every $p\in P$ let $A_p\in\cV$ and for every $p\le q$ in $P$ let $f_{pq}\colon A_p\to A_q$ be a homomorphism such that $(P,\cA,\cF)$ is an admissible ordered diagram of sets (with $\cA=\fami{A_p}{p\in P}$ and $\cF=\fami{f_{pq}}{p\le q\text{ in }P}$ and, moreover,
\begin{enumerate}
\item[(*)] for all $p\in P$, the sets $\setm{\Ker(f_{pq})}{ q\ge p\text{ in } P}$
and $M(\Con A_p)$ coincide.
\end{enumerate}
Then $\lim(P,\cA,\cF)$ is an algebra whose congruence lattice is isomorphic to~$L$.
\end{theorem}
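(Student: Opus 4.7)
Let $A=\lim(P,\cA,\cF)$ with coordinate projections $\pi_p\colon A\to A_p$, and set $\theta_p=\Ker\pi_p$. Since $P=\M(L)$ is finite and $\cV$ is closed under subalgebras of products, $A\in\cV$. My plan is to prove $\Con A\cong L$ by showing that $\Con A$ is a finite distributive lattice whose poset of completely meet-irreducibles is $\set{\theta_p\mid p\in P}$, order-isomorphic to $P=\M(L)$; Birkhoff's representation theorem will then finish the job.

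First I collect basic properties of the $\theta_p$. Admissibility~(i) makes each $\pi_p$ surjective, and the definition of the limit gives $\bigwedge_{p\in P}\theta_p=0$ in $\Con A$. For $p\le q$ the factorization $\pi_q=f_{pq}\circ\pi_p$ yields $\theta_p\le\theta_q$, while admissibility~(ii) is precisely the converse, so $p\mapsto\theta_p$ is an order-embedding of $P$ into $\Con A$. The standard surjection/congruence correspondence provides a complete lattice isomorphism
\[
\sigma_p\colon\Con A_p\to\upw\theta_p,\qquad\beta\mapsto\pi_p^{-1}(\beta),
\]
with $\sigma_p(\Ker f_{pq})=\theta_q$ for every $q\ge p$. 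Taking $q=p$ in condition~(*) places $0=\Ker f_{pp}$ in $\M(\Con A_p)$, so every $A_p$ is subdirectly irreducible and each $\theta_p=\sigma_p(0)$ is therefore completely meet-irreducible in $\Con A$.

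The heart of the proof is the reverse inclusion $\M(\Con A)\subseteq\set{\theta_p\mid p\in P}$. Let $\alpha\in\M(\Con A)$. Because $P$ is \emph{finite}, ordinary (finite) distributivity of $\Con A$ gives
\[
\alpha=\alpha\vee\bigwedge_{p\in P}\theta_p=\bigwedge_{p\in P}(\alpha\vee\theta_p),
\]
and meet-irreducibility of $\alpha$ forces $\alpha=\alpha\vee\theta_{p_0}$, that is $\theta_{p_0}\le\alpha$, for some $p_0\in P$. Then $\sigma_{p_0}^{-1}(\alpha)$ is completely meet-irreducible in $\Con A_{p_0}$; by~(*) it equals $\Ker f_{p_0q}$ for some $q\ge p_0$, whence $\alpha=\theta_q$. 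Combined with the order-embedding $p\mapsto\theta_p$, this identifies $(\M(\Con A),\le)$ with $(P,\le)$ as ordered sets.

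To conclude, I invoke the standard Birkhoff--Zorn argument in congruence lattices: for every $\alpha\in\Con A$ and every pair $(a,b)\notin\alpha$ there is a completely meet-irreducible $m\ge\alpha$ with $(a,b)\notin m$, hence $\alpha=\bigwedge\setm{m\in\M(\Con A)}{m\ge\alpha}$. Combined with the identification above this forces $|\Con A|\le 2^{|P|}$, so $\Con A$ is a finite distributive lattice whose meet-irreducible poset is isomorphic to $\M(L)$, and Birkhoff's representation yields $\Con A\cong L$. The subtle step is the identity $\alpha=\bigwedge_p(\alpha\vee\theta_p)$: it rests on $P$ being finite (so that no infinite distributive law is needed) and on $\bigwedge_p\theta_p=0$, which is precisely what the limit construction together with admissibility guarantees.
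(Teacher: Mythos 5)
The paper does not prove this theorem itself but imports it verbatim from \cite[Theorem~2.4]{N5}, so there is no in-paper proof to compare against; your argument is correct and follows the standard route one would expect from that source. In particular, you correctly use the correspondence theorem to identify $\upw\theta_p$ with $\Con A_p$, condition (*) (with $q=p$) to get complete meet-irreducibility of each $\theta_p$, finite distributivity plus $\bigwedge_p\theta_p=0$ to show every completely meet-irreducible congruence lies above some $\theta_{p_0}$ and hence equals some $\theta_q$, and the determination of a finite distributive lattice by its poset of meet-irreducibles (stated explicitly after Lemma~\ref{11}) to conclude $\Con A\cong L$.
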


We demonstrate the use of this theorem in the following easy case.

\begin{theorem}
Let~$\cV$ be a congruence-distributive variety such that $\Con C$ is a finite chain for every $C\in\SI(\cV)$. Suppose that $n=\max\setm{\length(\Con C)}{C\in\SI(\cV)}$ exists. Let~$L$ be a finite distributive lattice. The following conditions are equivalent.
\begin{enumerate}
\item[(i)] $L\in\Con\cV$;
\item[(ii)] For every $x\in \M(L)$, the poset~$\upw x$ is a chain of length at most $n$.
\end{enumerate}\label{chains}
\end{theorem}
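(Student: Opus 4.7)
\emph{Direction (i) $\Rightarrow$ (ii)} is immediate from Lemma~\ref{11}: each $\upw x$ with $x \in \M(L)$ is isomorphic to $\Con T$ for some $T \in \SI(\cV)$, so by hypothesis it is a chain of length at most $n$.

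\emph{For (ii) $\Rightarrow$ (i)}, the plan is to apply Theorem~\ref{re1} to a diagram built from quotients of a single ``universal'' subdirectly irreducible algebra. Fix $C \in \SI(\cV)$ attaining $\length(\Con C) = n$ and write $\Con C = \{\sigma_0 < \sigma_1 < \cdots < \sigma_n\}$; for $0 \le k \le n$ set $C_k = C/\sigma_{n-k}$, so $\Con C_k$ is a chain of length $k$. Put $P = \M(L)$; by~(ii), each $k_p := \length(\upw_L p)$ lies in $\{1, \dots, n\}$. Take $A_p = C_{k_p}$ and, for $p \le q$ in $P$ (so $k_q \le k_p$), let $f_{pq}\colon A_p \to A_q$ be the canonical quotient. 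Functoriality is immediate, and condition~(*) follows from $\upw_P p = \{p = p_0 < \cdots < p_{k_p-1}\}$ with $k_{p_j} = k_p - j$: the kernel $\Ker f_{p, p_j}$ is the level-$j$ element of the chain $\Con A_p$, and these $k_p$ elements exhaust $\M(\Con A_p)$.

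\emph{For admissibility,} part~(i) is handled by the ``lift-to-$C$'' trick: given $u \in A_p$, choose $c \in C$ with $c/\sigma_{n-k_p} = u$ and set $a_{p'} = c/\sigma_{n-k_{p'}}$ for every $p' \in P$, producing $a \in \lim$ with $a_p = u$. For part~(ii), fix $p \not\le q$. If $q < p$, then $k_q > k_p$, so $\sigma_{n-k_q} \subsetneq \sigma_{n-k_p}$; picking any $(c, c') \in \sigma_{n-k_p} \setminus \sigma_{n-k_q}$ and taking the corresponding global lifts yields $a, b \in \lim$ with $a_p = b_p$ and $a_q \ne b_q$. If $p, q$ are incomparable with $\upw p \cap \upw q = \emptyset$, then $p \sim q \iff \upw p \cap \upw q \ne \emptyset$ is an equivalence on $P$ (using the chain structure of upsets), so the diagram decomposes into its components, the limit becomes a direct product, and we can modify $q$'s component freely.

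\emph{The main obstacle} is the remaining case: $p, q$ incomparable but with a common upper bound, where we set $r := p \vee q$, the minimum of the chain $\upw p \cap \upw q$. A \emph{mixed lift} is required. The quotient $C/\sigma_{n-k_q} \twoheadrightarrow C/\sigma_{n-k_r}$ is non-injective, so some $\sigma_{n-k_r}$-class in $C$ splits into at least two $\sigma_{n-k_q}$-classes; pick $c, c', c'' \in C$ with $c \equiv c' \equiv c'' \pmod{\sigma_{n-k_r}}$ but $c' \not\equiv c'' \pmod{\sigma_{n-k_q}}$. Define $a \in \lim$ using the $c$-lift on the ``$p$-side'' (on $\dnw p \cup \upw p$ and on all other components of $P$) and the $c'$-lift on the ``$q$-side'' ($\dnw q \cup \upw q$); these coincide on $\upw p \cap \upw q = \upw r$ because $(c, c') \in \sigma_{n-k_r}$, and the tree structure within each component rules out further conflicts. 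Define $b$ analogously using $c''$ in place of $c'$. Then $a_p = b_p$ while $a_q \ne b_q$, completing admissibility, and Theorem~\ref{re1} yields an algebra in $\cV$ with congruence lattice isomorphic to $L$.
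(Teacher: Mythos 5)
Your overall strategy coincides with the paper's: the same chain of quotients $C_k$ of a single $C$ with $\length(\Con C)=n$, the assignment $A_p=C_{k_p}$ over $P=\M(L)$, the verification of condition (*), and the lift-to-$C$ argument for Definition~\ref{def}(i) are all exactly as in the paper, and those parts are correct. The gap is in your ``mixed lift'' for Definition~\ref{def}(ii), in the case of incomparable $p,q$ with $r=\min(\upw p\cap\upw q)$. First, your element $a$ is simply not defined on vertices of the component of $p$ and $q$ that lie outside $\dnw p\cup\upw p\cup\dnw q\cup\upw q$; such vertices exist, since the component is a tree, not two chains glued along $\upw r$. Second, and more seriously, even after extending $a$ by the $c$-lift on those vertices, compatibility fails on edges entering $\upw q$ strictly below $r$. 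Concretely, take $P$ with $p<r$, $q<u<r$, $t<u$, and $p,q,t$ pairwise incomparable; every upset is a chain, so this is a legitimate $\M(L)$ (it needs $n\ge 3$). Then $a_t=c/\sigma_{n-k_t}$ while $a_u=c'/\sigma_{n-k_u}$, and $f_{tu}(a_t)=a_u$ forces $(c,c')\in\sigma_{n-k_u}$; but you only guaranteed $(c,c')\in\sigma_{n-k_r}$, and $\sigma_{n-k_u}\subsetneq\sigma_{n-k_r}$ since $k_u>k_r$, so $a$ need not lie in the limit. The phrase ``the tree structure within each component rules out further conflicts'' is precisely where this is hidden.

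The repair, which is what the paper does, is to identify the two lifted elements one level above $q$ rather than at level $r$. In your notation: pick $(x,y)\in\sigma_{n-k_q+1}\setminus\sigma_{n-k_q}$, let $b$ be the global $y$-lift, and let $a$ agree with the $x$-lift on $\dnw q$ and with $b$ everywhere else. If $t\le s$ with $t\in\dnw q$ and $s\notin\dnw q$, then $q$ and $s$ both lie in the chain $\upw t$, hence are comparable, hence $s>q$, hence $k_s\le k_q-1$ and $x/\sigma_{n-k_s}=y/\sigma_{n-k_s}$; so $a$ is in the limit, $a_q\ne b_q$, and $a_p=b_p$ for every $p\not\le q$. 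This single argument subsumes all three of your cases, so neither the case distinction nor the decomposition into connected components is needed.
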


\begin{proof}
The necessity of (ii) follows directly from Lemma~\ref{11}. Conversely, suppose that~$L$ satisfies
(ii). We construct an algebra $A\in\cV$ with $\Con A$ isomorphic to~$L$.

There exist $C\in\cV$ whose congruence lattice is an $(n+1)$-element chain
$\alpha_0>\alpha_1>\dots>\alpha_n$. For every $i=1,\dots,n$ the quotient algebra $C_i=C/\alpha_i$
is subdirectly irreducible and $\Con C_i$ is the $(i+1)$-element chain. Further, for every
$j\le i$ we have a natural (surjective) homomorphism $g_{ij}\colon C_i\to C_j$.

We define the following diagram indexed by the ordered set $P=\M(L)$. For every $p\in P$,
$\upw p$ is a chain. We denote the cardinality of this chain by $i(p)$
and set $A_p=C_{i(p)}$. For every $p\le q$ in $P$, we have $i(p)\ge i(q)$; we set 
$f_{pq}=g_{i(p),i(q)}$. Let $A$ be the limit of this diagram.
It remains to check that the assumptions of Theorem~\ref{re1}
are satisfied.

For every $x\in C$ consider $(x/\alpha_{i(p)})_{p\in P}\in\prod_{p\in p}A_p$. It 
is clear that this is an element of $A$. All such elements show that all projections
$A\to A_p$ are surjective.

To prove the second admissibility condition, let $p,q\in P$ such that $p\not\le q$. Let $j=i(q)$. 
Since $j>0$ and $\alpha _j\subsetneq\alpha_{j-1}$, it is possible to choose $x,y\in C$ with
$x/\alpha_j\ne y/\alpha_j$ and
$x/\alpha_{j-1}=y/\alpha_{j-1}$. We define elements $a=(a_t),b=(b_t)\in\prod_{t\in P}A_t$ by

\[
a_t=\begin{cases}
      x/\alpha_{i(t)} &\text{if $t\le q$}\\
      y/\alpha_{i(t)} &\text{if $t\not\le q$,}
\end{cases}
\]
\[
b_t=y/\alpha_{i(t)}\, ,\quad\text{for every $t\in P$.}
\]
It is easy to see that $a,b\in A$, $a_p=b_p$, $a_q\ne b_q$. Indeed, the only nontrivial point is to show that $f_{rs}(a_r)=a_s$ when $r\le q$, $s\not\le q$, $r\le s$. Since~$\upw r$ is a chain, we have $q<s$, so $j=i(q)>i(s)$, and $f_{rs}(a_r)=f_{qs}(f_{rq}(a_r))=f_{qs}(a_q)=g_{j,i(s)}(x/\alpha_j)=g_{j-1,i(s)}(x/\alpha_{j-1})=g_{j-1,i(s)}(y/\alpha_{j-1})=y/\alpha_{i(s)}=a_s$.

Thus, our diagram is admissible. Finally, we check the condition (*). For every $p\in P$ we have $\M(\Con A_p)=\setm{\alpha_j/\alpha_{i(p)}}{j=1,\dots,i(p)}$, which is exactly the set of the kernels of homomorphisms $f_{pq}=g_{ij}$.
\end{proof}

The assumptions of Theorem~\ref{chains} are satisfied for instance for any finitely generated variety of modular lattices. Every finite subdirectly irreducible modular lattice is simple, hence its congruence lattice is the two-element chain.

Another example is the variety of Stone algebras, generated by the three-element pseudocomplemented lattice $S=(\set{0,b,1};\wedge,\vee,0,1,{}')$, with $0<b<1$. The congruence lattice of $S$ is the three-element chain. The only other irreducible member in this variety is the two-element Boolean algebra, which is simple.

So, in the terminology from the introduction, any finitely generated variety of modular lattices, and the variety of Stone algebras are all congruence FD-maximal.

\section{The case of V-varieties}
In this section we discuss the simplest case not covered by Theorem~\ref{chains}. We consider finitely generated congruence-distributive varieties, in which every subdirectly irreducible algebra is simple, or has its congruence lattice isomorphic to the lattice $V$ depicted in Figure~\ref{F:V}.

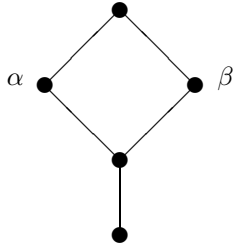
\begin{figure}
\unitlength 1mm
\begin{picture}(120,40)(0,10)
\put(35,30){$\alpha$}
\put(63,30){$\beta$}
\put(50,10){\line(0,1){10}}
\put(50,20){\line(-1,1){10}}
\put(50,20){\line(1,1){10}}
\put(40,30){\line(1,1){10}}
\put(60,30){\line(-1,1){10}}
\put(50,10){\circle*{2}}
\put(50,20){\circle*{2}}
\put(40,30){\circle*{2}}
\put(60,30){\circle*{2}}
\put(50,40){\circle*{2}}
\end{picture}
\caption{The lattice $V$.}\label{F:V}
\end{figure}

In the sequel, such varieties will be called \emph{V-varieties}. The similarity type of a V-variety is not assumed finite. If~$\cV$ is a V-variety then, by Lemma~\ref{11}, every finite distributive $L\in\Con\cV$ satisfies
\begin{enumerate}
\item[(**)] the ordered set $\M(L)$ is a disjoint union of two antichains $N\cup D$ such that for every $n\in N$ there are exactly two $d\in D$ with $n<d$.
\end{enumerate}
Indeed, if $L=\Con A$, $A\in\cV$, and $\alpha\in \M(L)$, then $\alpha\in N$ when $\Con(A/\alpha)\cong V$ and $\alpha\in D$ when $A/\alpha$ is simple. It follows that for a V-variety~$\cV$, if every finite distributive lattice satisfying (**) belongs to $\Con\cV$, then~$\cV$ is congruence FD-maximal.

Consider the smallest nonmodular lattice variety $\cN_5$ generated by the lattice $N_5$ depicted in Figure~\ref{F:smalllat}. By \cite[Theorem 3.1]{N5} the condition (**) is equivalent to saying that~$L$ belongs to $\Con\cN_5$. Therefore $\cN_5$ is congruence FD-maximal. It follows that a V-variety $\cV$ is congruence FD-maximal if and only if each finite lattice in $\Con\cN_5$ belongs to $\Con\cV$.

\begin{figure}[htb]
\setlength{\unitlength}{1mm}
\begin{picture}(45,60)(-5,-5)
\put(20,50){$N_5$}
\put(20,0){\line(-1,1){20}}
\put(20,0){\line(1,1){15}}
\put(35,15){\line(0,1){10}}
\put(0,20){\line(1,1){20}}
\put(35,25){\line(-1,1){15}}
\put(20,0){\circle*{2}}
\put(0,20){\circle*{2}}
\put(35,15){\circle*{2}}
\put(35,25){\circle*{2}}
\put(20,40){\circle*{2}}
\put(19,-4){$0$}
\put(19,42){$1$}
\put(-5,19){$b$}
\put(38,14){$a$}
\put(38,24){$c$}
\end{picture}
\begin{picture}(35,60)(-5,-5)
\put(15,50){$L_1$}
\put(15,0){\line(-1,1){15}}
\put(15,0){\line(0,1){15}}
\put(15,0){\line(1,1){15}}

\put(0,15){\line(0,1){15}}
\put(15,15){\line(-1,1){15}}
\put(15,15){\line(1,1){15}}
\put(30,15){\line(0,1){15}}

\put(30,30){\line(-1,1){15}}
\put(0,30){\line(1,1){15}}

\put(15,0){\circle*{2}}
\put(0,15){\circle*{2}}
\put(0,30){\circle*{2}}
\put(15,15){\circle*{2}}
\put(30,15){\circle*{2}}
\put(30,30){\circle*{2}}
\put(15,45){\circle*{2}}
\end{picture}
\begin{picture}(35,60)(-5,-5)
\put(15,50){$L_2$}
\put(15,0){\line(-1,1){15}}
\put(15,0){\line(1,1){15}}

\put(0,15){\line(0,1){15}}
\put(0,15){\line(1,1){15}}

\put(30,15){\line(0,1){15}}
\put(30,15){\line(-1,1){15}}

\put(30,30){\line(-1,1){15}}
\put(0,30){\line(1,1){15}}
\put(15,30){\line(0,1){15}}

\put(15,0){\circle*{2}}
\put(0,15){\circle*{2}}
\put(0,30){\circle*{2}}
\put(15,30){\circle*{2}}
\put(30,15){\circle*{2}}
\put(30,30){\circle*{2}}
\put(15,45){\circle*{2}}
\end{picture}
\caption{Small lattices generating V-varieties.}\label{F:smalllat}
\end{figure}
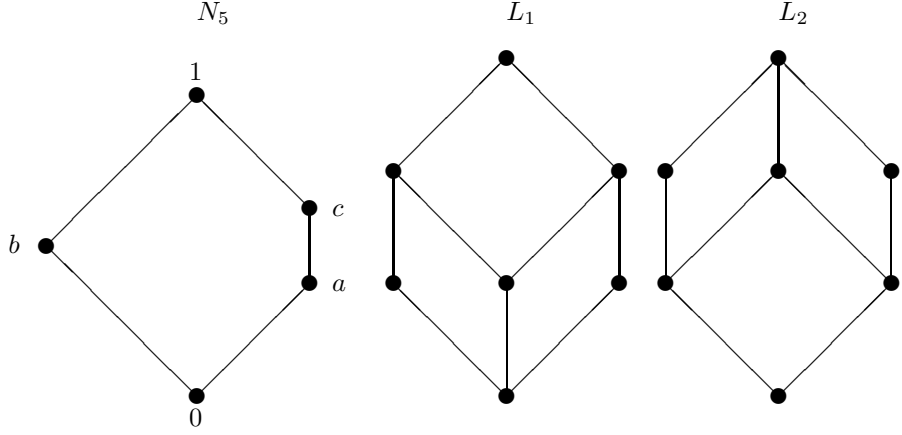

Denote by $\cL_1$ the variety of lattices generated by $L_1$, and by $\cL_2$ the variety of lattices generated by $L_2$. Given any finitely generated variety of modular lattices $\cM$, the varieties of lattices $\cM\vee\cN_5$, $\cM\vee\cL_1$, and $\cM\vee\cL_2$ are congruence FD-maximal V-varieties.

Let $E$ be a subset of $B\times B$ for some set $B$. Further, let $X$ be a set and let $\cF$ be a set of functions $X\to B$. We say that $\cF$ is $E$-\emph{compatible}
if there exists a linear order~$\sqsubset$ on $\cF$ such that $(f,g)[X]=E$ for all $f\sqsubset g$ in $\cF$. The set $\cF$ is called \emph{strongly} $E$-compatible if, in addition, $\Ker f\nsupseteq\bigcap\setm{\Ker g}{g\in\cF\setminus\set{f}}$ for every $f\in\cF$.

\begin{lemma}
Let $B$ be a finite set, let $E\subseteq B\times B$ and suppose that there is $(u,v)\in E$ with $u\ne v$.
Then the following condition are equivalent.
\begin{enumerate}
\item[(i)] There exist arbitrarily large finite $E$-compatible sets of functions.
\item[(ii)] There exist arbitrarily large finite strongly $E$-compatible sets of functions.
\item[(iii)] For every $(a,b)\in E$ there are $x,y,z\in B$ such that $(x,x)$, $(y,y)$, $(z,z)$, $(x,y)$, $(x,z)$, $(y,z)$, $(x,a)$, $(x,b)$, $(a,y)$, $(y,b)$, $(a,z)$, $(b,z)\in E$.
\end{enumerate}\label{xyz}
\end{lemma}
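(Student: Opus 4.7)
The implication (ii) $\Rightarrow$ (i) is immediate, since strong $E$-compatibility strengthens $E$-compatibility. The content lies in (i) $\Rightarrow$ (iii) and (iii) $\Rightarrow$ (ii).

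For (i) $\Rightarrow$ (iii), the plan is a pigeonhole extraction. Set $D = \setm{c \in B}{(c, c) \in E}$; the key observation is that if $f(t) = g(t) = c$ for some $f \sqsubset g$ in an $E$-compatible family $\cF$ and some $t$ in the common domain $X$, then $(c, c) \in (f, g)[X] = E$, forcing $c \in D$. Fix $N = 3 \card B + 5$ and take an $E$-compatible family $f_1 \sqsubset \cdots \sqsubset f_N$. Given $(a, b) \in E$, apply $E$-compatibility of the pair $(f_{\card B + 2}, f_{2 \card B + 4})$ to pick $t \in X$ with $f_{\card B + 2}(t) = a$ and $f_{2 \card B + 4}(t) = b$. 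The tuple $(f_1(t), \ldots, f_N(t))$ then splits into three disjoint segments of $\card B + 1$ positions each --- one left of the $a$-position, one strictly between the $a$- and $b$-positions, and one right of the $b$-position. Pigeonhole in each segment yields a repeated $B$-value; call these repeated values $x$, $y$, $z$ respectively. By the key observation, $x, y, z \in D$, so the three diagonal pairs of (iii) hold automatically. The remaining nine pairs all follow from $(f_p, f_q)[X] \subseteq E$ at the appropriate ordered indices, e.g.\ $(x, a) = (f_i(t), f_{\card B + 2}(t))$ for some $i \le \card B + 1$.

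For (iii) $\Rightarrow$ (ii), set $S_n = \setm{(c_1, \ldots, c_n) \in B^n}{(c_i, c_j) \in E \text{ for all } i < j}$. For any finite $X \subseteq S_n$ the coordinate projections $f_i(\tau) = \tau_i$ form an $E$-compatible family of size $n$ precisely when, for every $i < j$ and every $(a, b) \in E$, some tuple in $X$ has its $(i, j)$-components equal to $(a, b)$. The plan is to prove by induction on $n$ that (iii) implies $S_n$ itself surjects onto $E$ at every coordinate pair, and then take a finite $X \subseteq S_n$ realizing the finitely many witnesses required. For the upgrade to strong $E$-compatibility, I would enlarge $X$ by adding, for each coordinate $i$, a pair of tuples in $S_n$ differing only at position $i$; the existence of such pairs is again a consequence of (iii), which provides an alternative ``middle'' element between any fixed $(a, b) \in E$.

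The main obstacle is the inductive step for $S_n$. A single application of (iii) to $(a, b) \in E$ produces only one element below $\set{a, b}$, one between, and one above; but filling the $n - 2$ free positions of an $n$-tuple requires many mutually $E$-related elements in each segment, and also across segments. The plan is to iterate (iii) on pairs produced at earlier stages --- for instance by reapplying (iii) to $(x, y)$ and to $(y, z)$ from the first application --- while tracking the new pair relations introduced. Ensuring that every newly produced element is $E$-related to all previously chosen entries, not merely to its neighbours within the same segment, is the delicate combinatorial core of the argument, and is where the full twelve-relation strength of (iii) is exploited.
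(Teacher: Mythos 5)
Your implication (ii)$\Rightarrow$(i) is indeed trivial, and your (i)$\Rightarrow$(iii) is essentially the paper's argument verbatim: take $N=3\card B+5$ functions, realize $(a,b)$ at the two designated middle positions, and pigeonhole in the three segments of $\card B+1$ indices to the left of, between, and to the right of those positions. That half is fine. The gap is in (iii)$\Rightarrow$(ii), where you explicitly leave the ``delicate combinatorial core'' unresolved: for each pair of positions $i<j$ and each $(a,b)\in E$ you need a tuple in $S_n$ with $a$ at position $i$ and $b$ at position $j$, and your plan is to fill the remaining $n-2$ slots by iterating (iii) on pairs produced at earlier stages. That iteration is genuinely problematic, not just unfinished bookkeeping: applying (iii) to, say, $(x,y)$ produces new elements that are $E$-related to $x$ and $y$ but come with no guarantee of being $E$-related to $a$, $b$, $z$, or to anything else already chosen, so the process does not obviously close up.

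The idea you are missing is that no iteration and no induction on $n$ is needed, because nothing forces the filler entries to be distinct: a single application of (iii) to $(a,b)$ already yields the entire tuple
\[
(\underbrace{x,\dots,x}_{i-1},\,a,\,\underbrace{y,\dots,y}_{j-i-1},\,b,\,\underbrace{z,\dots,z}_{n-j}).
\]
Every coordinate pair of this tuple lies in $E$ by exactly the twelve relations listed in (iii): $(x,x),(y,y),(z,z)$ for pairs inside a constant block, $(x,y),(x,z),(y,z)$ for pairs across blocks, and the six relations involving $a$ and $b$ for the rest --- which is precisely why (iii) has that form. Ranging $(a,b)$ over $E$ and $(i,j)$ over all position pairs gives an $E$-compatible family of coordinate projections of any prescribed size. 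For the upgrade to strong compatibility the paper interleaves ``half-integer'' positions: the domain points indexed by $(m-\tfrac12,m)$ and $(m,m+\tfrac12)$ are separated by $f_m$ (values $b$ versus $a$, with $a\ne b$) but identified by every $f_n$ with $n\ne m$. Your alternative --- adjoining two tuples differing only at coordinate $m$, such as $(x,\dots,x,a,z,\dots,z)$ and $(x,\dots,x,b,z,\dots,z)$ --- also works, but again only once the explicit constant-block tuple is in hand.
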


\begin{proof}
Suppose first that (i) holds. Let $(a,b)$ in $E$. Let $n=3k+5$, where $k$ is the cardinality of $B$. By our assumption, there exists a set $X$ and an $E$-compatible set $\cF=\set{f_1,\dots,f_n}$ of functions $X\to B$ such that $(f_i,f_j)[X]=E$ for all $i<j$ in $\set{1,\dots,n}$. Especially, set $i=k+2$, $j=2k+4$. Since $(a,b)\in E=(f_i,f_j)[X]$, there exists $t\in X$ such that $f_i(t)=a$ and $f_j(t)=b$.

By the pigeonhole principle, there are $l<m$ in $\set{1,\dots,k+1}$ such that $f_l(t)=f_m(t)$. We put $x=f_m(t)$, so $(x,x)=(f_l(t),f_m(t))\in E$, $(x,a)=(f_m(t),f_i(t))\in E$, and $(x,b)=(f_m(t),f_j(t))\in E$.

Further, there are $p<q$ in $\set{k+3,\dots,2k+3}$ such that $f_p(t)=f_q(t)$. We put $y=f_p(t)$. Similarly as above we can check that $(y,y),(a,y),(y,b),(x,y)\in E$.

Finally, there are $r<s$ in $\set{2k+5,\dots,3k+5}$ such that $f_r(t)=f_s(t)$. We put $z=f_r(t)$ and obtain that $(z,z),(x,z),(y,z),(a,z),(b,z)\in E$. This completes the proof of the implication (i)$\Longrightarrow$(iii).

Suppose now that (iii) holds. Let $k$ be a natural number. We need to construct a set $X$ and a strongly $E$-compatible set $\set{f_1,\dots,f_k}$ of functions $X\to B$.

Given $(a,b)\in E$, we fix $x_{ab},y_{ab},z_{ab}\in B$ that satisfy (iii). We set
\[
H=\set{1/2,1,3/2,2,5/2,\dots,k, k+1/2}
\]
and
\[
U=\setm{(i,j)}{i<j\text{ in }H}
\]
Put $X=E\times U$. Given $m\in\set{1,\dots,k}$, $(a,b)\in E$, and $(i,j)\in U$, we set
\[
f_m(a,b,i,j) =
\begin{cases}
x_{ab} & \text{if $m<i$}\\
a & \text{if $m=i$}\\
y_{ab} & \text{if $i<m<j$}\\
b & \text{if $m=j$}\\
z_{ab} & \text{if $m>j$.}
\end{cases}
\]
This defines a map $f_m\colon X\to B$ for each $m\in\set{1,\dots,k}$. Let $m<n$ in $\set{1,\dots,k}$. Given $(a,b)\in E$ and $(i,j)\in U$, it is easy to see that $(f_m(a,b,i,j),f_n(a,b,i,j))\in E$, therefore the containment $(f_m,f_n)[X]\subseteq E$ holds. Conversely let $(a,b)\in E$, notice that $f_m(a,b,m,n)=a$ and $f_n(a,b,m,n)=b$, it follows that $(a,b)\in (f_m,f_n)[X]$. We deduce that $(f_m,f_n)[X]=E$ for all $n<m$ in $\set{1,\dots,k}$. Therefore the set $\set{f_1,\dots,f_k}$ is $E$-compatible.

Further, choose $(a,b)\in E$ with $a\ne b$. Then for every $m\in\set{1,\dots,k}$ we have $f_m(a,b,m-1/2,m)=b\ne a=f_m(a,b,m,m+1/2)$, so the pair $((a,b,m-1/2,m),(a,b,m,m+1/2))$ does not belong to $\Ker f_m$. On the other hand, it is easy to see that this pair belongs to $\Ker f_n$ for every $n\ne m$. Thus, our set is strongly $E$-compatible and we have proved the implication (iii)$\Longrightarrow$(ii). Since the implication (ii)$\Longrightarrow$(i) is trivial, the proof is complete.
\end{proof}

\begin{theorem}\label{T:caractmaxVvar}
Let~$\cV$ be a V-variety. The following conditions are equivalent.
\begin{enumerate} 
\item[(1)] $\cV$ is congruence FD-maximal.
\item[(2)] There exist $B,C\in\cV$ and surjective homomorphisms $h_0,h_1\colon C\to B$ such that
 \begin{enumerate}
 \item[(i)] $B$ is simple, $\Con C\cong V$;
 \item[(ii)] $\Ker(h_0)\ne\Ker(h_1)$;
 \item[(iii)] there are arbitrarily large $E$-compatible sets of functions for the set
 $E=(h_0,h_1)[C]=\setm{(h_0(c),h_1(c))}{c\in C}\subseteq B\times B$.
 \end{enumerate}
\end{enumerate}\label{main}
\end{theorem}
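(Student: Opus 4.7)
For $(2)\Rightarrow(1)$, given a finite distributive $L$ satisfying $(**)$, I would set $P=\M(L)=N\cup D$ and construct a diagram over $P$ to which Theorem~\ref{re1} applies. Take $A_p=C$ for $p\in N$ and $A_p=B$ for $p\in D$. For each $n\in N$, assign $h_0,h_1$ to the two upper neighbors of $n$ in some order, producing maps $f_{pq}$ whose kernels at $n$ are exactly $\{0,\alpha,\beta\}=\M(\Con C)$ and at $d\in D$ are $\{0\}=\M(\Con B)$; so condition~(*) of Theorem~\ref{re1} holds, and commutativity is vacuous because $P$ is bipartite. The substantive work is admissibility. Fix a linear order $\sqsubset$ on $D$ and, by swapping $h_0\leftrightarrow h_1$ at some $n$'s if necessary, arrange that the $h_0$-neighbor of every $n$ precedes its $h_1$-neighbor in $\sqsubset$. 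By (2)(iii) together with Lemma~\ref{xyz}, there exists a strongly $E$-compatible family $\{f_d\}_{d\in D}$ of functions $X\to B$, indexed by $\sqsubset$, with each $f_d$ surjective. For each $x\in X$ I would build a limit element $a^{(x)}$ by $a^{(x)}_d=f_d(x)$ and $a^{(x)}_n$ any element of $h_0^{-1}(f_{\alpha_n}(x))\cap h_1^{-1}(f_{\beta_n}(x))$, which is nonempty because $(f_{\alpha_n}(x),f_{\beta_n}(x))\in E$. Admissibility~(i) follows from surjectivity of each $f_d$ together with $(h_0,h_1)[C]=E$. For admissibility~(ii) I split by whether the coordinate $q$ to be separated lies in $D$ or $N$: when $q\in D$, the strongness clause provides $(x,y)$ with $f_{d'}(x)=f_{d'}(y)$ for every $d'\neq q$ but $f_q(x)\neq f_q(y)$, and compatible choices of $N$-coordinates in $a^{(x)},a^{(y)}$ give the required pair; when $q\in N$, the monolith $u=\alpha\wedge\beta$ of $\Con C$ is nontrivial, so each preimage $h_0^{-1}(b_0)\cap h_1^{-1}(b_1)$ with $(b_0,b_1)\in E$ has at least two elements and the $q$-coordinate can be altered within a $u$-class without cascading. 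Theorem~\ref{re1} then yields $\Con(\lim)\cong L$.

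For $(1)\Rightarrow(2)$, I would use a Ramsey-theoretic extraction. For each $k\geq 2$ let $L_k$ be the finite distributive lattice whose meet-irreducible poset is $D_k\cup N_k$, with $D_k=\{d_1,\ldots,d_k\}$ an antichain, $N_k=\{n_{ij}\mid 1\leq i<j\leq k\}$ an antichain, and $n_{ij}<d_i,d_j$ the only cover relations. Then $L_k$ satisfies $(**)$, so FD-maximality gives some $A_k\in\cV$ with $\Con A_k\cong L_k$. Each $A_k/d_i$ is simple, each $C_{ij}:=A_k/n_{ij}$ is a V-type SI, and the image of the natural map $A_k\to(A_k/d_i)\times(A_k/d_j)$ is a subdirect product isomorphic to $C_{ij}/u$. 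Finite generation of $\cV$ leaves only finitely many isomorphism types of such simples and SIs, and for each simple $B$ only finitely many subalgebras of $B\times B$. Pigeonhole on single indices first fixes $A_k/d_i\cong B$ uniformly over a large subset; Ramsey's theorem applied to pairs $(i,j)$ colored by (iso type of $C_{ij}$, image in $B\times B$ under fixed identifications $A_k/d_i\cong B$) then yields, for any prescribed $m$ and sufficiently large $k$, a monochromatic $m$-subset $I\subseteq\{1,\ldots,k\}$. A final pigeonhole over $k$ fixes a single tuple $(B,C,h_0,h_1,E)$ supporting $E$-compatible families of every finite size, with $h_0,h_1\colon C\to B$ the two composed quotients. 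Their kernels are the two distinct meet-irreducible congruences $\alpha,\beta$ of $\Con C\cong V$, and $(h_0,h_1)[C]=E$ by construction, establishing~(2).

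The principal obstacle is the admissibility~(ii) analysis in the first direction, particularly the cases where the separating coordinate lies in $D$. These are exactly the ones that require an individual $f_d$ to be distinguishable from the rest of the family, which is the additional strength granted by Lemma~\ref{xyz} beyond the bare $E$-compatibility of hypothesis~(2)(iii); the remaining cases, where the separating coordinate lies in $N$, rely instead on the nontriviality of the monolith of $C$.
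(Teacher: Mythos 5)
Your proposal follows essentially the same route as the paper in both directions: for (1)$\Rightarrow$(2) the Ramsey/pigeonhole extraction from the lattices whose meet-irreducible poset is $D_k\cup N_k$ (the paper colors edges by the pair of induced homomorphisms rather than by the image $E\subseteq B\times B$, but this is immaterial), and for (2)$\Rightarrow$(1) the inverse-limit construction over $\M(L)$ with a strongly $E$-compatible family indexed by $D$ obtained via Lemma~\ref{xyz}; your elements $a^{(x)}$ are exactly the paper's sets $U(x)$, and your case split for admissibility (ii) matches the paper's cases a)--c). One local misstatement in case $q\in N$: it is not true that every fiber $h_0^{-1}(b_0)\cap h_1^{-1}(b_1)$ with $(b_0,b_1)\in E$ has at least two elements (for $\cN_5$ the fiber over $(0,0)$ is the singleton $\set{0}$); the nontrivial monolith only yields one pair $u\ne v$ lying in both kernels, so you must first invoke the already-established surjectivity of the projection onto coordinate $q$ to produce a limit element whose $q$-entry is $u$ and then replace it by $v$ --- which is exactly how the paper argues, so the repair is immediate.
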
 

\begin{remark}
Let us note, in the context of Theorem~\ref{T:caractmaxVvar} that $\Ker(h_0)$ and $\Ker(h_1)$ correspond to congruences $\alpha,\beta\in V$. Also note that, $\cV$ being a finitely generated congruence-distributive variety, the algebras $B$ and $C$ are both finite.
\end{remark}

\begin{proof}[{Proof of \textup{Theorem~\ref{T:caractmaxVvar}}}]
Suppose that (1) holds. As~$\cV$ is congruence-distributive and finitely generated, it can only contain finitely many (up to isomorphism) algebras $C_1,\dots,C_m$ whose congruence lattice is isomorphic to $V$, and finitely many (up to isomorphism) simple algebras $B_1,\dots,B_l$. Moreover the algebras $C_1,\dots,C_m$ and $B_1,\dots,B_l$ are all finite.

Let $\cQ$ be the set of all ordered pairs $Q=(h_0,h_1)$ of surjective homomorphisms $h_0\colon C_i\to B_{i_0}$, $h_1\colon C_i\to B_{i_1}$ for some $i,i_0,i_1$, such that $\Ker(h_0)\ne\Ker(h_1)$. For every such $Q=(h_0,h_1)$ we denote $E_Q=(h_0,h_1)[C_i]$.

Clearly, every $C_i$ occurs in some $Q$. Notice that some $C_i$ can occur more than twice. This is because the algebras $B_i$ can have proper automorphisms and then the homomorphisms $h_0$, $h_1$ are not determined uniquely by their kernels. Denote by $r$ the cardinality of $\cQ$. 

For contradiction, suppose that (2) is not valid, that is, for every $Q=(h_0,h_1)\in\cQ$ either $h_0$ and $h_1$ have distinct ranges or the size of a set of $E_Q$-compatible functions is bounded, say by a number $n_Q$. Choose a natural number $n$ with $n>2$ and $n>n_Q$ for every defined $n_Q$. By the Ramsey's Theorem, we can choose $k$ such that every complete graph with $k$ vertices and edges colored by $r$ (the cardinality of $Q$) colors has a full subgraph with $n$ vertices, in which every edge has the same color.

Consider the finite distributive lattice~$L$ such that $\M(L)$ is the disjoint union of two discrete sets $D=\set{d_1,\dots,d_k}$ and $N=\setm{x_{ij}}{1\le i<j\le k}$, with the $d_i$ (resp., the $x_{ij}$) pairwise distinct, and the only relations $x_{ij}<d_i,d_j$ for $1\le i<j\le k$.

According to our assumption,~$L$ is representable as $\Con A$ for some finite $A\in\cV$. Every quotient $A/x_{ij}$ must be isomorphic to some $C_t$ and every quotient $A/d_i$ is isomorphic to some $B_u$. Let us fix the isomorphisms $g_i\colon A/d_i\to B_u$ and $g_{ij}\colon A/x_{ij}\to C_t$. Further, let $p_{ij}\colon A/x_{ij}\to A/d_i$ and $q_{ij}\colon A/x_{ij}\to A/d_j$ be the natural projections. Then $(g_ip_{ij}g_{ij}^{-1},g_jq_{ij}g_{ij}^{-1})$ is an element of $\cQ$.

Let us consider the complete graph with vertices $d_1,\dots,d_k$. To every edge $\set{d_i,d_j}$, $i<j$, we have assigned an element of $\cQ$. By the definition of $k$, there is a $n$-element subset $I\subseteq\set{1,\dots,k}$ such that the same $Q=(h_0,h_1)\in\cQ$ is assigned to every $i<j$ in $I$. To simplify the notation, assume that $I=\set{1,\dots,n}$.

The algebras $A/x_{ij}$ ($i<j$ in $I$) are all isomorphic to the same algebra $C\in\set{C_1,\dots,C_m}$. Since $n\ge 3$, the algebras $A/d_i$ ($i\in I$) must be isomorphic to the same algebra $B\in\set{B_1,\dots,B_l}$, hence both $h_0$ and $h_1$ have range $B$. Therefore, the set $E_Q\subseteq B^2$ is defined. For every $i\in I$ let $f_i$ be the natural projection $A\to A/d_i$ composed with the isomorphism $g_i\colon A/d_i\to B$. The maps $f_i$ are pairwise distinct (they have different kernels) and we claim that the family $\setm{f_i}{i\in I}$ is $E_Q$-compatible. Let $i<j$ in $I$, hence $h_0=g_ip_{ij}g_{ij}^{-1}$ and $h_1=g_jq_{ij}g_{ij}^{-1}$. For every $a\in A$ we have $(f_i(a),f_j(a))=(h_0(b),h_1(b))\in E_Q$, where $b=g_{ij}(a_{x_{ij}})\in C$. Since every element of $E_Q$ is of this form, we obtain that $(f_i,f_j)[A]=\setm{(f_i(a),f_j(a))}{a\in A}=E_Q$.

So, we have proved that the family $\setm{f_i}{i\in I}$ is $E_Q$-compatible. This is a contradiction with the inequality $n>n_Q$.

Conversely, suppose that (2) holds. Let~$L$ be a finite distributive lattice with the property that the ordered set $\M(L)$ consists of two antichains $N$ and $D$ such that for every $n\in N$ there are exactly two $d_1,d_2\in D$ with $n<d_1,d_2$. We construct a finite algebra $A\in \cV$ such that $\Con A$ is isomorphic to~$L$.

According to our assumption, there are algebras $B,C\in \cV$ and homomorphisms $h_0,h_1\colon C\to B$ satisfying (i), (ii), and (iii). There is a set $X$ and an $E$-compatible family of functions $X\to B$ of cardinality equal to the cardinality of $D$. To simplify the notation, we identify these functions with elements of $D$. Because of Lemma~\ref{xyz} we can assume that the family is strongly $E$-compatible. We fix a linear order $\sqsubset$ on $D$ such that $(d,e)[X]=E$ for all $d\sqsubset e$ in $D$, such order exists by the definition of $E$-compatibility.

We use the limit construction described in the previous section, with $\M(L)$ as the index set. We put
\[
A_p= \begin{cases}
     B & \text{if $p\in D$}\\
     C & \text{if $p\in N$.}
     \end{cases}
\]

For every $n\in N$ there are $d\sqsubset e$ in $D$, with $n<d,e$. We set $f_{nd}=h_0$ and $f_{ne}=h_1$. Let $A$ be the limit of the above defined diagram.

Our diagram clearly satisfies the condition (*) of Theorem~\ref{re1}. We need to check the admissibility. Given $x\in X$, we denote by $U(x)$ the set of all $y\in\prod_{p\in \M(L)}A_p$ such that $y_d=d(x)$ for all $d\in D$ and $(h_0(y_n),h_1(y_n))=(d(x),e(x))$ for all $n\in N$, where $d\sqsubset e$ in $D$ with $n<d,e$.

Let $y\in U(x)$, let $n\in N$, let $d\sqsubset e$ in $D$ such that $n<d,e$. Notice that $f_{nd}(y_n)=h_0(y_n)=d(x)=y_d$ and $f_{ne}(y_n)=h_1(y_n)=e(x)=y_e$. It follows easily that $y$ belongs to $A$. Therefore $U(x)\subseteq A$. Set $U=\bigcup_{x\in X}U(x)$, note that $U\subseteq A$.

Let $x\in X$. Given $d\in D$, set $y_d=d(x)$. Let $n\in N$ and $d\sqsubset e$ in $D$ such that $n<d,e$, notice that $(h_0,h_1)[X]=E=(d,e)[C]$, therefore there is $y_n\in C$ such that $(h_0(x),h_1(x))=(d(y_n),e(y_n))$. This defines an element $y\in U(x)$, therefore $U(x)$ is not empty.

Let $c_0\in C$, let $n_0\in N$, let $d_0\sqsubset e_0$ in $D$ such that $n_0<d_0,e_0$. As $(h_0,h_1)[C]=E=(d_0,e_0)[X]$, there is $x\in X$ such that $(d_0(x),e_0(x))=(h_0(c_0),h_1(c_0))$. With a construction similar to the previous one we obtain $y\in U(x)$, moreover we can choose $y$ such that $y_{n_0}=c_0$. Therefore, for all $c\in C$, for all $n\in N$, there is $y\in U$ such that $y_n=c$. With a similar argument we obtain that, for all $b\in B$, for all $d\in D$, there is $y\in U$ such that $y_d=b$.

This shows~Definition~\ref{def}(i). To show the second admissibility condition, let $p\nleq q$ in $\M(L)$. We distinguish several cases.

\begin{enumerate}
\item[a)] Let $p,q\in D$. The strong compatibility implies that $\Ker p\nsubseteq\Ker q$, so there is 
$(x,y)\in\Ker p\setminus\Ker q$. Pick $x^*\in U(x)$ and $y^*\in U(y)$, then $x^*_p=y^*_p$ and $x^*_q\ne y^*_q$.

\item[b)] Let $p\in N$, $q\in D$. There are $d\sqsubset e$ in $D$ with $p<d,e$. Clearly, $q\notin\set{d,e}$ and by the strong compatibility there exists $(x,y)\in\Ker d\cap\Ker e$, with $(x,y)\notin\Ker q$. Then $(d(x),e(x))=(d(y),e(y))$, so there are $x^*\in U(x)$ and $y^*\in U(y)$ such that $x^*_p=y^*_p$. On the other hand, $(x,y)\notin\Ker q$ means that $x^*_q\ne y^*_q$.

\item[c)] Finally, let $q\in N$. Since the algebra $A_q=C$ is subdirectly irreducible, there
are elements $u,v\in A_q$ such that $u\ne v$ and the pair $(u,v)$ belongs to the monolith of $A_q$.
There is $y\in U$ with $y_q=u$. Define an element $t\in\prod_{r\in \M(L)}A_r$ by
\[
t_r=
\begin{cases}
v   & \text{if $r=q$}\\
y_r & \text{otherwise.}
\end{cases}
\]
Then $t\in A$. Indeed, let $r,s\in \M(L)$, $r<s$. The equality $f_{rs}(t_r)=t_s$ follows from $y\in A$,
except for the case $r=q$. However, $f_{qs}$ is either $h_0$ or $h_1$, and the pair
$(u,v)$ belongs to the kernels of both, so $f_{qs}(t_q)=f_{qs}(v)=f_{qs}(u)=f_{qs}(y_q)=y_s=t_s$.

Thus, $t\in A$. Clearly, $t_q\ne y_q$ and $t_p=y_p$. The proof is complete.
\end{enumerate}
\end{proof}

Theorem~\ref{main} in connection with Lemma~\ref{xyz} enables to decide whether a given V-variety is congruence FD-maximal. As an example, consider the variety $\cN_5$. The algebra $N_5$ has congruence lattice isomorphic to $V$, both its simple quotients are isomorphic to $\set{0,1}$, and the homomorphisms $h_0,h_1\colon N_5\to\set{0,1}$ are given by $h_0(0)=h_0(a)=h_0(c)=0$, $h_0(b)=h_0(1)=1$, $h_1(b)=h_1(0)=0$, $h_1(a)=h_1(c)=h_1(1)=1$. Hence, $E=\set{(0,0),(0,1),(1,0),(1,1)}$ and it is easy to see that~\ref{xyz}(iii) is satisfied. (Indeed,~\ref{xyz}(iii) always holds for reflexive $E$.) As seen at the beginning of this section, we already knew, from \cite[Theorem 3.1]{N5}, that $\cN_5$ is congruence FD-maximal.

\section{Congruence non FD-maximal examples}\label{S:nmax}

It is not difficult to construct a V-variety which is not congruence FD-maximal. For instance, consider the lattice $N_5$ endowed with the additional unary operation~$f$ with $f(0)=f(b)=b$, $f(a)=f(c)=f(1)=1$. (See the picture of $N_5$ in Figure~\ref{F:smalllat}) This algebra has the same congruences as the lattice $N_5$, so its congruence lattice is isomorphic to $V$. One of the simple quotient of this algebra is isomorphic to the lattice $\set{0,1}$ with the identity as additional operation. The other simple quotient is the lattice $\set{0,1}$ with the map permuting $0$ and $1$ as additional operation. The two simple quotients of this algebra are not isomorphic. By Theorem~\ref{main}, the variety~$\cV$ generated by this algebra is not congruence FD-maximal. (Of course, a detailed proof of this fact requires checking that~$\cV$ does not contain any other algebra with congruence lattice isomorphic to $V$, but this is an easy consequence of J\'onsson's Lemma.)

Now we present a more sophisticated example. Let $C$ be the lattice depicted in Figure~\ref{F:Valgebra} with two additional unary operations~$f$ and~$g$.

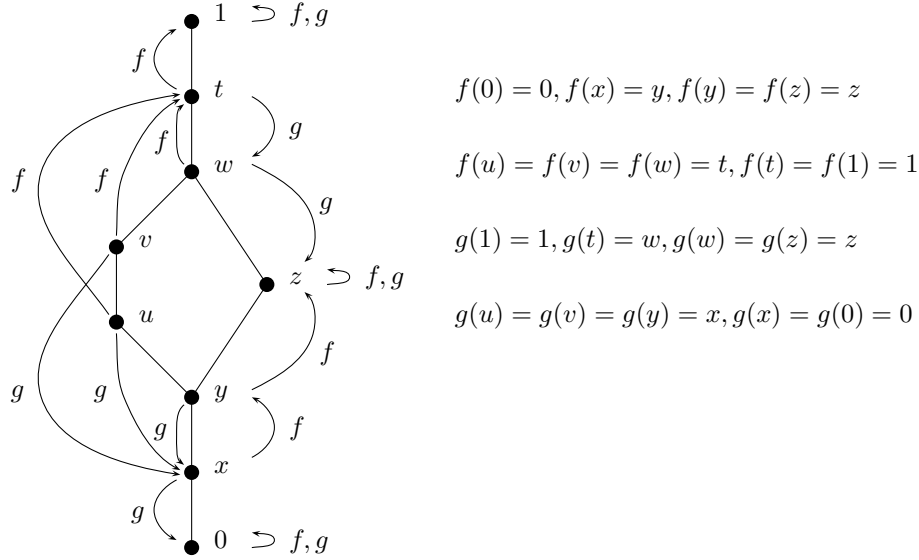
\begin{figure}[htb]
\begin{pspicture}(0.5,-1)(12,8)
\pslinewidth0.4pt
\pscircle*[linecolor=black](3,0){0.1}
\pscircle*[linecolor=black](3,1){0.1}
\pscircle*[linecolor=black](3,2){0.1}

\pscircle*[linecolor=black](2,3){0.1}
\pscircle*[linecolor=black](4,3.5){0.1}
\pscircle*[linecolor=black](2,4){0.1}

\pscircle*[linecolor=black](3,5){0.1}
\pscircle*[linecolor=black](3,6){0.1}
\pscircle*[linecolor=black](3,7){0.1}
\psline(3,0)(3,2)

\psline(3,2)(2,3)
\psline(2,3)(2,4)
\psline(2,4)(3,5)

\psline(3,2)(4,3.5)
\psline(4,3.5)(3,5)

\psline(3,5)(3,7)

\put(3.3,0){$0$}
\put(3.3,1){$x$}
\put(3.3,2){$y$}

\put(2.3,3){$u$}
\put(2.3,4){$v$}
\put(4.3,3.5){$z$}

\put(3.3,5){$w$}
\put(3.3,6){$t$}
\put(3.3,7){$1$}

%de 0 à 0 pour f et g
\psecurve{->}(3.5,0)(3.8,0)(4.1,0.1)(3.8,0.2)(3.5,0.2)
\put(4.3,0){$f,g$}
%de z à z pour f et g
\psecurve{->}(4.5,3.5)(4.8,3.5)(5.1,3.6)(4.8,3.7)(4.5,3.7)
\put(5.3,3.5){$f,g$}
%de 1 à 1 pour f et g
\psecurve{->}(3.5,7)(3.8,7)(4.1,7.1)(3.8,7.2)(3.5,7.2)
\put(4.3,7){$f,g$}

%de x à y pour f
\psecurve{->}(3.5,1.2)(3.8,1.2)(4.1,1.6)(3.8,2)(3.5,2)
\put(4.3,1.5){$f$}
%de y à z pour f
\psecurve{->}(3.3,2)(3.8,2.1) (4.6,2.8) (4.5,3.4)(3.5,3.5)
\put(4.7,2.5){$f$}

%de x à 0 pour g
\psecurve{->}(3.1,0.9)(2.8,0.9)(2.5,0.5)(2.8,0.1)(3.1,0.1)
\put(2.2,0.4){$g$}

%de y à x pour g
\psecurve{->}(3.1,1.9)(2.9,1.9)(2.8,1.5)(2.9,1.1)(3.1,1.1)
\put(2.5,1.5){$g$}

%de u à x pour g
\psecurve{->}(2,3)(2,2.85)(2.1,2)(2.85,1.03)(3,1.02)
\put(1.7,2){$g$}

%de v à x pour g
\psecurve{->}(2,4)(1.9,3.9)(1,2)(2.85,0.97)(3,1)
\put(0.6,2){$g$}

%maintenant le symétrique

%de t à w pour g
\psecurve{->}(3.5,6)(3.8,6)(4.1,5.6)(3.8,5.2)(3.5,5.2)
\put(4.3,5.5){$g$}

%de w à z pour g
\psecurve{->}(3.3,5.2)(3.8,5.1) (4.6,4.4) (4.5,3.8)(3.5,3.7)
\put(4.7,4.5){$g$}

%de t à 1 pour f
\psecurve{->}(3.1,6.1)(2.8,6.1)(2.5,6.5)(2.8,6.9)(3.1,6.9)
\put(2.2,6.4){$f$}

%de w à t pour f
\psecurve{->}(3.1,5.1)(2.9,5.1)(2.8,5.5)(2.9,5.9)(3.1,5.9)
\put(2.5,5.3){$f$}

%de v à t pour f
\psecurve{->}(2,4)(2,4.15)(2.1,5)(2.85,5.97)(3,5.98)
\put(1.7,4.8){$f$}

%de u à t pour f
\psecurve{->}(2,3)(1.9,3.1)(1,5)(2.85,6.03)(3,6)
\put(0.6,4.8){$f$}

\put(6.5,6){$f(0)=0, f(x)=y, f(y)=f(z)=z$}
\put(6.5,5){$f(u)=f(v)=f(w)=t, f(t)=f(1)=1$}
\put(6.5,4){$g(1)=1, g(t)=w, g(w)=g(z)=z$}
\put(6.5,3){$g(u)=g(v)=g(y)=x, g(x)=g(0)=0$}
\end{pspicture}
\caption{An algebra generating a V-variety.}\label{F:Valgebra}
\end{figure}

The algebra $C$ is subdirectly irreducible and its congruence lattice is isomorphic to $V$. The smallest
nontrivial congruence (the monolith) collapses only the pair $(u,v)$. Two other nontrivial congruences are
$\alpha=(0xyz)(uvw)(t)(1)$ and $\beta=(0)(x)(yuv)(zwt1)$. The quotients $C/\alpha$ and $C/\beta$ are both
isomorphic to the simple algebra $B=\set{0,a,b,1}$ depicted in Figure~\ref{F:Simpalgebra}.

\begin{figure}[hbt]
\begin{pspicture}(0,-1)(12,4)
\pslinewidth0.4pt
\pscircle*[linecolor=black](2,0){0.1}
\pscircle*[linecolor=black](2,1){0.1}
\pscircle*[linecolor=black](2,2){0.1}
\pscircle*[linecolor=black](2,3){0.1}
\psline(2,0)(2,3)
\put(2.3,0){$0$}
\put(2.3,1){$a$}
\put(2.3,2){$b$}
\put(2.3,3){$1$}

%de 0 à 0 pour f et g
\psecurve{->}(2.5,0)(2.8,0)(3.1,0.1)(2.8,0.2)(2.5,0.2)
\put(3.3,0){$f,g$}
%de 1 à 1 pour f et g
\psecurve{->}(2.5,3.1)(2.8,3.1)(3.1,3.2)(2.8,3.3)(2.5,3.3)
\put(3.3,3.2){$f,g$}

%de a à b pour f
\psecurve{->}(2.5,1.2)(2.8,1.2)(3.1,1.6)(2.8,2)(2.5,2)
\put(3.3,1.5){$f$}

%de b à 1 pour f
\psecurve{->}(2.5,2.2)(2.8,2.2)(3.1,2.6)(2.8,3)(2.5,3)
\put(3.3,2.5){$f$}

%de a à 0 pour g
\psecurve{->}(2.1,0.9)(1.8,0.9)(1.5,0.5)(1.8,0.1)(2.1,0.1)
\put(1.2,0.4){$g$}

%de b à a pour g
\psecurve{->}(2.1,1.9)(1.8,1.9)(1.5,1.5)(1.8,1.1)(2.1,1.1)
\put(1.2,1.4){$g$}

\put(6,3){$f(0)=0, f(a)=b, f(b)=f(1)=1$}
\put(6,2){$g(1)=1, g(b)=a, g(a)=g(0)=0$}
\end{pspicture}
\caption{A simple quotient of $C$.}\label{F:Simpalgebra}
\end{figure}
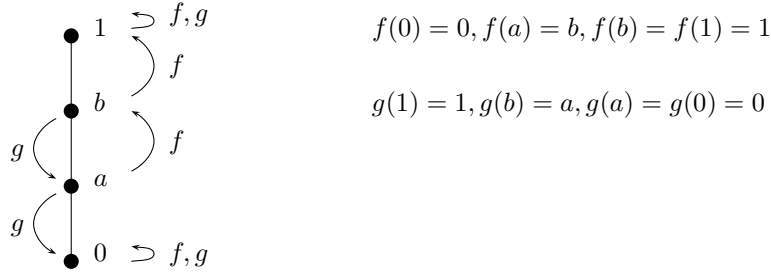

Let $\cC$ be the variety generated by $C$. Thus, $\cC$ is finitely generated and congruence-distributive. Let us check the subdirectly irreducible members of $\cC$. By J\'onsson's lemma, we need to investigate the quotients of subalgebras of $C$. Let $\dv$ denote the
subalgebra $\set{0,1}$ of $C$. Notice that all the two-element subalgebras of $C$ are isomorphic to $\dv$.
The subalgebras of $C$ with more than two elements are $C_1=\set{0,1,z}$, $C_2=\set{0,x,y,z}$,
$C_3=\set{1,t,w,z}$,
$C_4=\set{0,1,x,y,z}$, $C_5=\set{0,1,t,w,z}$, $C_6=C\setminus\set{u,v}$, $C_7=C\setminus\set{u}$,
$C_8=C\setminus\set{v}$, and $C$ itself. Then
$C_1$ is a subdirect product of two copies of $\dv$, $C_2$ and $C_3$ are isomorphic to $B$. Further, 
$C_4$ and $C_5$ are subdirect products of $B$ and $\dv$, while $C_6$, $C_7$ and $C_8$ are subdirect
products of two copies of $B$. 

Hence $C$, $B$, and $\dv$ are (up to isomorphism) the only subdirectly irreducible members of $\cC$,
so $\cC$ satisfies the assumptions of the previous section and we can use Theorem~\ref{main}
to determine whether $\cC$ is congruence FD-maximal. Since $B$ does not have proper automorphisms,
the homomorphisms $h_0,h_1\colon C\to B$ are determined uniquely (up to interchanging) by $h_0(0)=h_0(x)=h_0(y)=h_0(z)=0$, $h_0(u)=h_0(v)=h_0(w)=a$, $h_0(t)=b$, $h_0(1)=1$,
$h_1(1)=h_1(t)=h_1(w)=h_1(z)=1$, $h_1(u)=h_1(v)=h_1(y)=b$, $h_1(x)=a$, $h_1(0)=0$.
Hence,
\[
E=\set{(0,0),(0,a),(0,b),(0,1),(a,b),(a,1),(b,1),(1,1)}.
\]
The pair $(a,b)\in E$ shows that~\ref{xyz}(iii) is not satisfied, as there is no $t\in B$ such that $(a,t)\in E$ and $(t,b)\in E$. So the set $E$ does not allow arbitrarily large sets of $E$-compatible functions. Since the variety $\cC$ does not provide any other candidates for $C$, $B$, $h_0$ and $h_1$, \ref{main}(2) is not satisfied, so $\cC$ is not congruence FD-maximal.

\section{Conclusion}

The characterization of congruence FD-maximal V-varieties helps to decide whether several critical points are finite or not.

Let $\cV$ be a V-variety. Let $\cW$ be a congruence FD-maximal V-variety. It is easy to see that $\crit{\cV}{\cW}\ge\aleph_0$, moreover $\crit{\cW}{\cV}\ge\aleph_0$ if and only if $\cV$ is congruence FD-maximal. For example, denote by $\cV$ the variety generated by the algebra $C$ in Section~\ref{S:nmax} (see Figure~\ref{F:Valgebra}). We have seen that $\cN_5$ is congruence FD-maximal and $\cV$ is not congruence FD-maximal, it follows that $\crit{\cN_5}{\cV}<\aleph_0$.

When both varieties are not congruence FD-maximal, we cannot even decide whether the critical point is finite or not.

\begin{problem}
Given a V-variety $\cV$, which is not congruence FD-maximal. Find a characterization of the finite lattice in $\Con\cV$.
\end{problem}

The lattice $V$ (cf. Figure~\ref{F:V}) is the smallest distributive lattice, which is not a chain, isomorphic to the congruence lattice of a subdirectly irreducible algebra. It would be interesting to study other examples.

\begin{problem}
Characterize finitely generated congruence FD-maximal varieties.
\end{problem}

\end{document}